\setlist{leftmargin=8mm}
\newcommand{\R}{\mathbb{R}}
\newcommand{\reals}{\mathbb{R}}
\renewcommand{\phi}{\varphi}
\newcommand{\myicon}{$\,\,\,\triangleright$}
\newcommand{\abs}[1]{\left\lvert#1\right\rvert}
\DeclareMathOperator{\ind}{\mathrm{ind}}
\DeclareMathOperator{\id}{\mathrm{id}}
\newtheorem{theorem}{Theorem}
\newtheorem{proposition}[theorem]{Proposition}
\theoremstyle{definition}
\newtheorem{remark}[theorem]{Remark}
\newtheorem{example}[theorem]{Example} 
\newtheorem{discussion}[theorem]{Discussion}
\begin{document}

\title[Remarks on a paper by J. Wang, Z. Xie and G. Yu]{Remarks on the paper {\em On Gromov's dihedral extremality and rigidity conjectures} by Jinmin Wang, Zhizhang Xie and Guoliang Yu}  
\author{Christian B\"ar}
\address{Universit\"at Potsdam, Institut f\"ur Mathematik, 14476 Potsdam, Germany}
\email{\href{mailto:cbaer@uni-potsdam.de}{cbaer@uni-potsdam.de}}
\urladdr{\url{https://www.math.uni-potsdam.de/baer}}

\author{Bernhard Hanke}
\address{Universit\"at Augsburg, Institut f\"ur Mathematik, 86135 Augsburg, Germany}
\email{\href{mailto:hanke@math.uni-augsburg.de}{hanke@math.uni-augsburg.de}}
\urladdr{\url{https://www.math.uni-augsburg.de/diff/hanke}}

\author{Thomas Schick} 
\address{Universit\"at G\"ottingen, Mathematisches Institut, 37073 G\"ottingen, Germany} 
\email{\href{mailto:thomas.schick@math.uni-goettingen.de}{ thomas.schick@math.uni-goettingen.de}} 
\urladdr{\url{https://www.uni-math.gwdg.de/schick}}

\begin{abstract} 
 Version~2 of the article ``On Gromov's dihedral extremality and rigidity conjectures'' by Jinmin Wang, Zhizhang Xie and Guoliang Yu  makes a number of claims for self-adjoint extensions of Dirac type operators on manifolds with corners under local boundary conditions. 
We construct a counterexample to an index computation in that paper which affects the proof of its main result stating a generalisation of Gromov's dihedral extremality conjecture.

\end{abstract}



\maketitle

Consider the manifold with corners 
 \begin{equation*}
A:= [-2,2]^2\setminus (-1,1)^2\subset  \reals^2.
\end{equation*}
together with the Euler characteristic operator $d+d^*\colon  \Omega^{*} (A) \to \Omega^{*}(A)$ on smooth differential forms with grading by even/odd differential forms. 
Let $D_i$ (for initial operator) be the restriction of $d + d^*$ to  forms supported  in the complement of the vertices of $A$ and  subject to absolute boundary conditions.

\begin{remark} Recall that (by definition) a smooth differential form $\omega \in \Omega^*(A)$ supported away from the vertices satisfies {\em absolute boundary conditions} if
  \begin{equation*}
     \iota^*(*\omega)=0
   \end{equation*}
 where $*\omega$ is the Hodge star of $\omega$ and $\iota\colon\partial  A\to A$ is the inclusion of the boundary. 
Equivalently, the contraction of $\omega$ with vectors normal  to the boundary is zero.
\end{remark} 

Explicitly, a differential form $\omega \in \Omega^*(A)$ in degrees $0$, $2$ and $1$  lies in ${\rm dom}(D_i)$, if $\omega$ is, respectively, 
    \begin{enumerate}[\myicon] 
   \item a smooth function $f \colon A \to \R$ supported  away from the vertices, 
   \item a smooth $2$-form $f(x,y)dx \wedge dy \in \Omega^2(A)$ where $f$ is supported away from the  vertices and  vanishes on $\partial A$, 
  \item  a smooth $1$-form $fdx+gdy \in \Omega^1(A)$ where  $f$ and $g$ are supported away from the vertices, $f$ vanishes on the vertical boundary edges
  \[ 
      \partial_v A  = \big( \{\pm 2\}\times [-2,2] \big)  \cup   \big( \{\pm 1\}\times [-1,1] \big)  \subset \partial A
    \]
 and $g$ vanishes on the horizontal boundary edges 
\[
    \partial_h A = \big( [-2,2] \times \{\pm 2\} \big) \cup \big( \{\pm 1\} \times [-1,1] \big) \subset \partial A .
  \]
\end{enumerate} 

We obtain a densely defined unbounded symmetric and closable operator 
\[
    D_i=d+d^* \colon L^2 \Omega^*(A) \supset  {\rm dom}(D_i) \to  L^2  \Omega^*(A) . 
\]
It coincides with the operator $D_B$ appearing in \cite{WXYv2}*{Chapter 3} for $N=M:= A$ and $f:= \id_A$, compare \cite{WXYv2}*{Proposition A.2}. 

Following \cite{WXYv2}*{Proposition  3.6} we now consider the self-adjoint extension of $D_i$ given by 
\[
   D_e  := \left( \begin{array}{cc} 0  & (D_i)_{\rm min} \\   (D_i)_{\rm max} & 0 \end{array} \right), 
\]
that is, restricted to $1$-forms we use its minimal extension $(D_i)_{\rm min}$ and on even degree forms its maximal extension $(D_i)_{\rm max}$ (which contains the minimal extension).

\begin{proposition} \label{main} The operator $D_e$ has the following properties: 
\begin{enumerate}[(i)] 
\item Restricted to $0$-forms its kernel is at least $1$-dimensional, 
\item restricted to $1$-forms its kernel is $0$. 
\end{enumerate} 
\end{proposition}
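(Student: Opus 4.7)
For part (i), my plan is immediate: the constant function $1$ lies in $L^2(A)$ (since $A$ has finite measure) and in $\mathrm{dom}((D_i)_{\max})$ because $(d+d^*)1 = 0$, so it is a nonzero element of $\ker D_e$ supported in degree zero.

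For part (ii), the key is a Bochner-type identity on the initial domain whose boundary term is killed by the absolute boundary conditions alone, combined with a standard closure argument. Writing a $1$-form as $\omega = f\,dx + g\,dy$, a direct calculation gives
\begin{equation*}
    |\nabla\omega|^2 - |d\omega|^2 - |d^*\omega|^2 = 2(f_y g_x - f_x g_y),
\end{equation*}
and since $(f_y g_x - f_x g_y)\,dx\wedge dy = -df \wedge dg = -d(f\,dg)$, Stokes' theorem on the manifold with corners $A$ yields
\begin{equation*}
    \int_A |\nabla\omega|^2 \dV = \|D_i\omega\|_{L^2}^2 - 2\int_{\partial A} f\,dg.
\end{equation*}
I would then check that the boundary integral vanishes for every $\omega \in \mathrm{dom}(D_i)$: on each vertical edge $f \equiv 0$ directly; on each horizontal edge $g\equiv 0$ forces $\iota^*(dg) = g_x\,dx$ to vanish (the tangential derivative of a function that is identically zero along the edge), so $f\,dg$ again pulls back to zero. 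This gives the clean identity $\|\nabla\omega\|_{L^2} = \|D_i\omega\|_{L^2}$ on $\mathrm{dom}(D_i)$.

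For the closure step, let $\omega \in \ker((D_i)_{\min})$ and pick approximants $\omega_n \in \mathrm{dom}(D_i)$ with $\omega_n \to \omega$ and $D_i\omega_n \to 0$ in $L^2$. Applying the identity to differences shows $(\omega_n)$ is Cauchy in $H^1(A;\Lambda^1)$, so its $H^1$-limit is $\omega$ with $\nabla\omega = 0$. Hence $\omega$ is a constant $1$-form on the connected domain $A$, say $\omega = \alpha\,dx + \beta\,dy$. Since $A$ is a bounded Lipschitz domain, the trace map $H^1(A) \to L^2(\partial A)$ is continuous, so the vanishing of $f_n$ on $\partial_v A$ and of $g_n$ on $\partial_h A$ passes to the limit and forces $\alpha = \beta = 0$, whence $\omega = 0$.

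The main point to verify is the Bochner identity together with the observation that its boundary term is annihilated by the absolute BCs alone, with no additional relative conditions required; once that is in place, the passage to the closure and the trace argument are routine on the Lipschitz domain $A$.
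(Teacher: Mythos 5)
Your part (ii) is, in substance, the paper's own argument. The identity $\|\nabla\omega\|_{L^2}^2=\|D_i\omega\|_{L^2}^2$ for $\omega\in{\rm dom}(D_i)$ restricted to $1$-forms is exactly what the paper derives by integrating the two cross terms $\partial_yf\,\partial_xg$ and $\partial_xf\,\partial_yg$ by parts, using $f=0$ on $\partial_v A$ for one and $\partial_xg=0$ on $\partial_h A$ for the other; you merely repackage the same cancellation as Stokes' theorem applied to $f\,dg$ (legitimate, since the forms are supported away from the vertices). The closure and trace steps are likewise the paper's.

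Part (i) is where you genuinely diverge, and where your write-up has a gap. The paper proves the \emph{stronger} statement that the constants lie in the \emph{minimal} domain, via the explicit cut-offs $1-\epsilon^\alpha r^{-\alpha}$ near each vertex; this is the delicate point and is what ultimately drives the index anomaly discussed in the Remark. For the proposition as stated it does suffice to put $1$ into ${\rm dom}((D_i)_{\rm max})$, which is much easier --- but not for the reason you give. The maximal extension is the adjoint of $D_i$, so membership of $u$ in its domain is \emph{not} the condition ``$(d+d^*)u\in L^2$ distributionally''; it requires
\[
  \langle u,(d+d^*)w\rangle_{L^2}=\langle v,w\rangle_{L^2}\qquad\text{for all }w\in{\rm dom}(D_i),
\]
and since such $w$ need not vanish on $\partial A$, this encodes a weak boundary condition in general (for instance, it forces the $2$-form component of an even form in the maximal domain to vanish weakly on $\partial A$). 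You must therefore check that the Green's-formula boundary term actually vanishes in degree $0$: for $u=1$ and $w=f\,dx+g\,dy\in{\rm dom}(D_i)$ one has $\langle 1,d^*w\rangle=\pm\int_A(\partial_xf+\partial_yg)=\pm\int_{\partial A}\langle w,\nu\rangle\,ds=0$ by the absolute boundary condition, so indeed $1\in{\rm dom}((D_i)_{\rm max})$ with $(D_i)_{\rm max}1=0$. With that one line added your route is correct and strictly simpler than the paper's; what it does not give you is the sharper information (constants already in the minimal domain) that the paper exploits to conclude $\ind(D_e)=1$.
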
 

\begin{remark} \begin{enumerate}[\myicon]
\item This contradicts \cite{WXYv2}*{Claim on page 34} which for $N = M:= A$ and $f:= \id_A$ states that $D_e$ is Fredholm with index equal to the Euler characteristic of $A$ (which is equal to $0$).
In particular, $\ind(D_e)$ is not equal to the index of the Euler characteristic operator with absolute boundary conditions on a smooth annulus  (which is equal to the Euler characteristic of the annulus), contrary to what is claimed in \cite{WXYv2}*{Section 4}. 
\item With a slighly more elaborate proof one can show that $D_e$ is Fredholm with index $1$. 
\end{enumerate}
\end{remark} 

\begin{proof}[Proof of Proposition \ref{main}]  
\underline{Part (i):} We prove that the constant functions on $A$ are in the minimal domain of $D_i$ (and therefore also in the maximal domain), which implies the assertion.

Since the question is local, we can concentrate on a single vertex $V$ of $A$ which we place in the origin. 
Near $V$ the given manifold $A$ is just a cone with a certain angle $2 \beta$ equal to $\pi/2$ or $3\pi/2$. 
(For the following computation the value of the angle $\beta$ is irrelevant).

Around $V$ we use polar coordinates $(r,\phi)$ and on the ball $B_1(0)$ we approximate  the constant function with value $1$ in $H^1$ by functions
 \begin{equation*}
f_\epsilon^\alpha(r,\phi):= 
 \begin{cases}   1-\epsilon^\alpha r^{-\alpha}; & \epsilon <r<1\\
     0; & r<\epsilon
\end{cases}
\end{equation*}
for $\alpha>0$ small and $\epsilon>0$ small. 
Note that $f_\epsilon^{\alpha}$ is continuous and smooth outside $\{r=\epsilon\}$.

We have on $\{r>\epsilon\}$ that
\begin{equation*}
\abs{\nabla f_\epsilon^\alpha} = \abs{\partial_r f_{\epsilon}^\alpha} =
\epsilon^\alpha \alpha r^{-\alpha-1}. 
\end{equation*}
Therefore (integrating in polar coordinates) we obtain 
\begin{align*}
 |\nabla f_{\epsilon}^{\alpha}|_{L^2(B_1(0))} =    \int_{B_1(0)} \abs{\nabla f_{\epsilon}^\alpha}^2 &= \int_\epsilon^1  \epsilon^{2\alpha} \alpha^2 r^{-2\alpha-2} 2\beta r \,dr = 2\beta\epsilon^{2\alpha}\alpha^2\frac{1}{-2\alpha} r^{-2\alpha}|_\epsilon^1 \\
    &= \beta\alpha - \beta \alpha \epsilon^{2\alpha} .
  \end{align*}

Furthermore $1-f_\epsilon^\alpha$ is positive and  is bounded above by $1$ on $\{0<r<\sqrt{\epsilon}\}$ and by $\epsilon^\alpha(\sqrt{\epsilon})^{-\alpha} = \epsilon^{\alpha/2}$ on $\{r>\sqrt{\epsilon}\}$. 
Therefore
\begin{equation*}
  \abs{1-f_\epsilon^\alpha}^2_{L^2(B_1(0))} \le \beta \epsilon + \beta \epsilon^\alpha. 
\end{equation*}

For $\alpha>0$ we now set $\epsilon(\alpha)  :=  \alpha^{1/\alpha}$ and obtain $\lim_{\alpha \to 0} \epsilon(\alpha)  = 0$, $\lim_{\alpha \to 0} \epsilon(\alpha)^{\alpha} \to 0$ and consequently 
\begin{equation*}
  \abs{1-f_{\alpha^{1/\alpha}}^\alpha}^2_{H^1} = \int_{B_1(0)} \abs{\nabla
    f_{\alpha^{1/\alpha}}^\alpha}^2 +
  \abs{1-f_{\alpha^{1/\alpha}}^\alpha}_{L^2} \xrightarrow{\alpha\to 0} 0.
\end{equation*}

\underline{Part (ii):} 
Assume that $\omega \in L^2 \Omega^1(A)$ is  in the kernel of the minimal extension of $D_i$.
By definition, there exist smooth $1$-forms $\omega_n:= f_n dx+g_ndy$ on $A$ with support away from the vertices, satisfying absolute boundary conditions and such that 
\[
     \omega_n\xrightarrow{L^2}\omega , \quad (d+d^*)\omega_n\xrightarrow{L^2}0 . 
\]
We have
  \begin{equation*}
    d\omega_n =( -\partial_y f_n + \partial_x g_n)dx\wedge dy; \quad d^*\omega_n = \partial_xf_n+\partial_y g_n 
  \end{equation*}
and hence 
 \begin{align*} 
     \abs{\omega_n}^2_{L^2} &= \int_A (-\partial_yf_n+\partial_x g_n)^2 + (\partial_xf_n+\partial_y g_n)^2 \\
     &= \int_A (\partial_yf_n)^2 +(\partial_x g_n)^2 -2(\partial_yf_n)(\partial_x g_n) + (\partial_xf_n)^2+(\partial_yg_n)^2 + 2(\partial_xf_n)(\partial_y g_n).
   \end{align*} 
  Now we apply partial integration to $\partial_yf_n\cdot \partial_xg_n$ and $\partial_x f_n\cdot\partial_y g_n$ (which is possible without problems because $f_n,g_n$ are smooth) and  obtain (with the obvious appropriate domains of integration when applying Fubini's theorem)
  \begin{align*}
      - \int_A (\partial_x f_n)\cdot(\partial_y g_n) &= -\int
      \left(\int \partial_x f_n\cdot \partial_y g_n\,dx\right)\,dy\\
      &
      \stackrel{f_n=0\text{ on }\partial_v A}{=} \int  \left( \int f_n\cdot\partial_y\partial_x g_n\,dx\right)\,dy = -\int_A   f_n\partial_y\partial_x g_n.
    \end{align*}
 Similarly,
   \begin{equation*}
       \int_A(\partial_yf_n)\cdot(\partial_xg_n) =   \int\left(\int \partial_yf_n \cdot \partial_x g_n\,dy\right)\,dx\\
       \stackrel{\partial_x g_n=0 \text{ on }\partial_h A}{=} - \int_A f_n \partial_y\partial_x g_n.
   \end{equation*}
Combined with the above computation, we obtain
\begin{equation*}
  \abs{(d+d^*)\omega_n}^2_{L^2} = \abs{\partial_xf_n}_{L^2}^2 + \abs{\partial_y f_n}^2_{L^2} + \abs{\partial_x g_n}^2_{L^2} + \abs{\partial_y g_n}^2_{L^2} \xrightarrow{n\to\infty} 0.
\end{equation*}

We now can use classical Sobolev theory on $\reals^2$ without having to worry about the vertices. 
Even if we don't want to define and use Sobolev spaces on the non-smooth domain $A$, certainly  $\omega=fdx+gdy\in H^1_{loc}\Omega^1(A^\circ)$ with $\partial_xf=0$, $\partial_yf=0$, $\partial_xg=0$, and $\partial_yg=0$. 
Consequently, $\omega$ is constant (and smooth).

The boundary conditions for $f_n$ and $g_n$ imply that $f=0$ as $L^2$-function on the vertical boundary edges and that $g=0$ as $L^2$-function on the horizontal boundary edges, therefore $\omega=0$, as claimed.
\end{proof}

\begin{discussion}
One might wonder how this relates to the fact that $A$ (or rather its interior) is conformally equivalent to a suitable smooth annulus, on which we have a non-trivial harmonic $1$-form $\omega$ which satisfies absolute boundary conditions and which is (of course) in $L^2$, compare \cite{WXYv2}*{Example 3.7.}. 

It is true that the pullback $f^*\omega$ with the conformal isomorphism $f$ between the interior of the annulus and the interior of $A$ produces a $L^2$-harmonic $1$-form on $A$.
 It seems also to be true that $f^*\omega$ satisfies absolute boundary conditions at the regular part of the boundary.
However, the behaviour of $f^*\omega$ near the singularities is not controlled. 
In particular, there is no reason (and it is not true) that it is an $L^2$-limit of smooth forms $\omega_n$ with support off the singularities such that also $(d+d^*)\omega_n$ $L^2$-converges to $0$. 
Outside middle degree,  $f^*$ is far from being an $L^2$-isometry on forms or functions,  and it does not commute with $d+d^*$ in general.
\end{discussion}

The following counterexample to \cite{WXYv2}*{Theorem 1.6.} was communicated to us  by Rudi Zeidler. 

\begin{example}
Let $P$ be the non-convex $2$-dimensional polyhedron obtained from the square $[-2,2]^2$ by removing an open regular pentagon. 
Take as a second non-convex polyhedron $Q$ the complement of $[-2,2]^2$ with an open equilateral triangle in the interior removed. 
  
Now construct a corner  map $P \to Q$ (see \cite{WXYv2}*{Definition 1.5}) as follows: 
\begin{enumerate}[\myicon] 
  \item Near the  outer (square) boundary $\partial A \subset \partial P$ take the identity, 
  \item send $3$ consecutive edges $AB$, $BC$, $CD$ of the inner pentagon $ABCDE \subset P$ by affine linear maps to the three edges $A'B'$, $B'C'$, $C'A'$ of the inner triangle $A'B'C' \subset Q$, 
  \item send the remaining vertex $E$ (like $A$ and $D$) to the vertex $C'$ and the edges $DE$ and $AE$  by ``fold'' maps to the edges $A'B'$ and $A'C'$. 
  (The local model for such a fold map could just be $[-1,1]\to [-1,1]; t\mapsto t^2$.) 
  \item Extend this map arbitrarily to a corner map on $P$. 
\end{enumerate} 

Next remove another small open convex polyhedron (e.g.~a triangle) near the outer (square) boundaries of both  $P$ and $Q$, to produce $P'$ and $Q'$, which then have Euler characteristic $-1$. 
By restriction of the given map $P \to Q$ we obtain a corner map $f' \colon P' \to Q'$. 

For $r \gg 0$ we denote by $rP'$ the $r$-scaling of $P$. 
Composition of multiplication by $1/r$ and the map $f'$ yields a corner map $f \colon r P'\to Q'$. 

If $r$ is chosen large enough, the map $f$  satisfies all the conditions of \cite{WXYv2}*{Theorem 1.6.}, but Conclusion (iii) of that theorem doesn't hold. 
 \end{example}

\begin{discussion} We believe that the last equality in the estimate
\[
    - \int_{\overline{C}_{ij,r}}  k_r \langle e_1 , f_* \overline{e}_1\rangle_M \cdot \langle \overline{c}_{\partial}(\overline{e}_1) \otimes c_{\partial} (e_1) \phi, \phi \rangle d \overline{s} \geq - \int_{C_{ij,r}}  | k_r | \cdot \langle \phi , \phi \rangle ds = - ( \pi - \theta_{ij}) |\phi(v_{ij})|^2 + o(1) 
\]
appearing in the proof of  \cite{WXYv2}*{Proposition 2.6} (see middle of page 19) requires the additional assumption that all $\theta_{ij} \leq \pi$, since otherwise $\pi - \theta_{ij}$ becomes negative, while $|k_r|$ is non-negative. 
This flaw may be remedied by adding the assumption to \cite{WXYv2}*{Proposition 2.6} that all dihedral angles appearing in $M$ are less than or equal to $\pi$, which  seems to be in accordance with the given non-negativity assumption on the second fundamental form of $\partial M$. 

However, we do not see where such a convexity assumption on dihedral angles may help for the index theoretic argument  outlined in  \cite{WXYv2}*{Sections 3 and 4}. 
\end{discussion}

\begin{discussion}
  The most delicate part seems to be the analysis of the local boundary value
  problems on singular domains, which is a classical and very delicate
  field of study with few general results and many unexpected features.

  We decided to make this note publicly available to increase awareness of
  these problems, which might help the community to avoid technical pitfalls.

  The authors of \cite{WXYv2}, after having been informed by us about the
  counterexamples, have revised their paper a number of times.
  The current version~6 of their paper \cite{WXYv2v6} is still under verification.

\end{discussion}

\begin{bibdiv}
\begin{biblist}

\bib{WXYv2}{article}{
  author={Wang, Jinmin}, author={Xie, Zhizhang}, author={Yu, Guoliang},
  title={On Gromov's dihedral extremality and rigidity conjectures. Version~2},
  note={{\rm avaible at} \url{https://arxiv.org/abs/2112.01510v2}}
}
    
\bib{WXYv2v6}{article}{
  author={Wang, Jinmin}, author={Xie, Zhizhang}, author={Yu, Guoliang},
  title={On Gromov's dihedral extremality and rigidity conjectures. Version~6},
  note={{\rm avaible at} \url{https://arxiv.org/abs/2112.01510v6}}
}
    
\end{biblist}
\end{bibdiv}

\end{document}